\definecolor{Chocolat}{rgb}{0.36, 0.2, 0.09}
\definecolor{BleuTresFonce}{rgb}{0.215, 0.215, 0.36}
\definecolor{EgyptianBlue}{rgb}{0.06, 0.2, 0.65}
\newtheorem{theorem}{Theorem}
\newtheorem{corollary}[theorem]{Corollary}
\theoremstyle{definition}
\newtheorem{remark}[theorem]{Remark}
\newtheorem{definition}[theorem]{Definition}
\DeclareMathAlphabet{\pazocal}{OMS}{zplm}{m}{n}
\DeclareMathOperator{\Der}{Der}
\DeclareMathAlphabet{\mathbbold}{U}{bbold}{m}{n}
\def\k{\mathbbold{k}}
\begin{document}

\title{Derived operations satisfy standard identities}

\author{Vladimir Dotsenko}
\address{Institut de Recherche Math\'ematique Avanc\'ee, Universit\'e de Strasbourg, 7 rue Ren\'e-Descartes, 67000 Strasbourg, France, and Institute of Mathematics and Mathematical Modeling, Pushkin St. 125, 050010 Almaty, Kazakhstan}

\email{vdotsenko@unistra.fr}

\begin{abstract}
A derived operation is a bilinear operation on a commutative associative algebra $A$ defined intrinsically out of its product and  several derivations of the product. We show that operators of left (or right) multiplications of a derived operation always satisfy a ``standard identity'' of certain order. In particular, it implies that each Rankin--Cohen bracket of modular forms, as well as each higher bracket of Kontsevich's universal deformation quantization formula for Poisson structures on $\mathbb{R}^n$, satisfies standard identities. 
\end{abstract}

\subjclass[2020]{17A30 (Primary), 13N15, 17B01 (Secondary)}

\maketitle

\section{Introduction}

The goal of this short note is to record a simple but rather interesting example of how some remnants of algebraic properties of operations are ``inherited'' once building more complicated operations out of them. The algebraic structure we shall start with is an associative commutative algebra $A$ and a certain number of derivations $D_1$, \ldots, $D_n$ of that algebra. Equipped with this data, we may define a new bilinear operation $\{-,-\}$ on $A$ by the formula
 \[
\{a,b\}:=\sum_{j=1}^s f_j(a)g_j(b),   
 \] 
where $f_1$, $g_1$, \ldots, $f_s$, $g_s$ are linear combinations of iterations of the derivations $D_1$, \ldots, $D_n$. We shall refer to operations of this type as \emph{derived operations} on $A$. (It might be tempting to use the term ``derived bracket'', but we do not wish to create confusion with the derived brackets in the sense of \cite{MR2104437}.) Many interesting examples of nonassociative algebras in algebra, geometry, and mathematical physics are obtained via the derived operation construction (we shall recall a few of those below); in fact, there are even categorified derived operations in theoretical computer science \cite{fiore2014}. 

We shall demonstrate that, no matter what commutative associative algebra with several derivations we take, any derived operation satisfies an identity of a certain standard form: in a sense, the commutative associative law still shines through. 
Specifically, our main general result asserts that every derived operation satisfies some standard identities: there exists $d>0$ such that we have
 \[
s_{d,l}^{\{-,-\}}(a_1,\ldots,a_d)=s_{d,r}^{\{-,-\}}(a_1,\ldots,a_d)=0     
 \]
for all $a_1,\ldots,a_d\in A$. Here $s_{d,l}^{\{-,-\}}(a_1,\ldots,a_d)$ and $s_{d,r}^{\{-,-\}}(a_1,\ldots,a_d)$ are versions of the so called standard polynomials that play an important role in the context of associative PI-algebras, as well as of identities of Lie algebras: 
\begin{gather*}
s_{d,l}^{\{-,-\}}(a_1,\ldots,a_d):=\sum_{\sigma\in S_{d-1}}(-1)^{\sigma}\{a_{\sigma(1)},\{a_{\sigma(2)},\{\ldots, \{a_{\sigma(d-2)}, \{a_{\sigma(d-1)},a_d\}\} \ldots\}\}\},\\ 
s_{d,r}^{\{-,-\}}(a_1,\ldots,a_d):=\sum_{\sigma\in S_{d-1}}(-1)^{\sigma}\{\{\{\ldots  \{\{a_d,a_{\sigma(d-1)}\},a_{\sigma(d-2)}\}, \ldots\},a_{\sigma(2)} \}, a_{\sigma(1)}\}.
\end{gather*}
The bound on $d$ is an explicit expression depending on the derived operation; we obtain a very rough bound in the general case, and show how to substantially improve it in the case where the derivations $D_1$, \ldots, $D_n$ generate a finite-dimensional Lie subalgebra of $\Der(A)$. 

In Section \ref{sec:examples}, we discuss several examples of derived operations in order to highlight the scope of applicability of our results. Then, in Section \ref{sec:result}, we explain the proofs of the two main results of the paper, and spell out what exactly these results mean for Rankin--Cohen brackets of modular forms, the derived operations that served as our original motivation. Finally, in Section \ref{sec:higher}, we note a generalization of our result for operations with a higher number of arguments.

\section{Examples of derived operations}\label{sec:examples}

\subsection{Finite-dimensional Lie algebras of derivations}

\subsubsection{Derived operations in classical mechanics and the theory of PDEs}

It is perhaps fair to say that one of the most famous non-associative operations is the Poisson bracket of smooth functions on $\mathbb{R}^{2n}$ given by
 \[
\{f,g\}:=\sum_{i=1}^n\left(X_i(f)Y_i(g)-X_i(g)Y_i(f)\right) ,    
 \]
where $X_i=\frac{\partial\phantom{f}}{\partial x_i}$ and $Y_i=\frac{\partial\phantom{f}}{\partial x_{n+i}}$. Of course, it is a derived operation for the abelian Lie algebra spanned by the derivations $X_i$ acting on the commutative associative algebra of smooth functions; it admits an obvious generalization where $\mathbb{R}^{2n}$ is replaced by a symplectic manifold. If instead of $\mathbb{R}^{2n}$, one considers $\mathbb{R}^{2n+1}$, or, more generally, a contact manifold, there is an important operation $\{f,g\}_J$ going back to the work of Jacobi \cite{MR1579190} defined by 
 \[
\{f,g\}_J:=fZ(g)-gZ(f)+\sum_{i=1}^n\left(X_i(f)Y_i(g)-X_i(g)Y_i(f)\right),     
 \]
where $X_i=\frac{\partial\phantom{f}}{\partial x_i}$, $Y_i=\frac{\partial\phantom{f}}{\partial x_{n+i}}+x_i\frac{\partial\phantom{f}}{\partial x_{2n+1}}$, and $Z=\frac{\partial\phantom{f}}{\partial x_{2n+1}}$. 
Note that the derivations $X_i, Y_i, Z$ have the commutation relations of the Heisenberg Lie algebra:
 \[
[X_i,X_j]=[Y_i,Y_j]=[X_i,Z]=[Y_i,Z]=0, \quad [X_i,Y_j]=\delta_{ij}Z.   
 \]
We refer the reader to the work of Kirillov \cite{MR438390} and Lichnerowicz \cite{MR524629} for more information about the geometric meaning of these Jacobi brackets. Moreover, Kirillov \cite{MR1029493} proved that these brackets satisfy standard identities of certain degrees depending on $n$ (if one interprets these Lie algebras as the Lie algebra of Hamiltonian and contact vector fields respectively, this follows from the general fact that Lie algebras of vector fields on finite-dimensional manifolds satisfy nontrivial Lie identities \cite[Sec.~2.1.3]{MR4368864}, but the lowest  degree of a nontrivial identity is slightly lower for Hamiltonian and contact vector fields, as shown by Kirillov).

In the context of partial differential equations, Mayer \cite{MR1509865} defined an operation $\{f,g\}_M$ on smooth functions on $\mathbb{R}^{2n+1}$ closely related to the Jacobi bracket; it is given by 
\[
\{f,g\}_M:=\sum_{i=1}^n\left(X_i(f)Y_i(g)-X_i(g)Y_i(f)\right) ,    
 \]
where $X_i, Y_i, Z$ are as above. (The Jacobi bracket satisfies the Jacobi identity but does not satisfy the Leibniz rule with respect to the product of functions, while the Mayer bracket satisfies the Leibniz rule but does not satisfy the Jacobi identity, though one can find weaker identities for Mayer brackets \cite{MR2514853,MR1867927}.) 

\subsubsection{Novikov algebras and related examples} The following construction first appeared in \cite{GD79}, where it is attributed to S.~I.~Gelfand. Let $A$ be a commutative associative algebra, and let $\partial$ be a derivation of that algebra. One may consider the new binary operation on $A$ given by $a\circ b:=a\partial(b)$. This operation satisfies the identities
\begin{gather*}
(a\circ b)\circ c-a\circ(b\circ c)=(b\circ a)\circ c - b\circ(a\circ c),\\
(a\circ b)\circ c=(a\circ c)\circ b,
\end{gather*}
which define the class of nonassociative algebras called Novikov algebras (named by Osborn \cite{MR1163779} as a recognition of \cite{MR802121}); in fact, every Novikov algebra is a subalgebra of an algebra of this type, see \cite{MR3590862}. 

By the very definition, the Novikov product is an instance of a derived operation for the one-dimensional abelian Lie algebra spanned by $\partial$, making it arguably the simplest example of a derived operation. In this case, standard identities are well known: indeed, the second of Novikov  identities, once written as
 \[
(a_3\circ a_1)\circ a_2-  (a_3\circ a_2)\circ a_1=0, 
 \]
is easily seen to be the standard identity $s_{3,r}^{-\circ-}(a_1,a_2,a_3)=0$; furthermore, it is known \cite{MR1743658} that the identity 
$s_{4,l}^{-\circ-}(a_1,a_2,a_3,a_4)=0$ holds in every Novikov algebra.

We remark that we can also consider the operations $a\circ b-b\circ a$ and $a\circ b+b\circ a$ defined on any Novikov algebra. In terms of commutative associative algebra with derivations, we have 
\begin{gather*}
[a,b]:=a\circ b-b\circ a=a\partial(b)-b\partial(a),\\
a\star b:=a\circ b+b\circ a=a\partial(b)+b\partial(a),
\end{gather*}
so these are derived operations. Standard identities for them are also known: it is well known \cite[Sec.~2.1.3]{MR4368864} that 
$s_{5,l}^{[-,-]}(a_1,a_2,a_3,a_4,a_5)=0$, and one can show that $s_{4,l}^{-\star}(a_1,a_2,a_3,a_4)=0$, see \cite[Prop.~2.3]{MR1944285}.  

\subsubsection{Rankin--Cohen algebras}\label{sec:rc}

In \cite{MR1280058}, Zagier, motivated by a particular series of bilinear differential operators acting on modular forms, defined an algebraic structure that he called Rankin--Cohen algebras.

\begin{definition}
Let $A=\bigoplus_{n\ge 0}A_n$ be a graded commutative associative algebra and let $D$ be a derivation of degree $2$, so that $D(A_n)\subset A_{n+2}$. The \emph{Rankin--Cohen bracket} $[a,b]_n$ on $A$ is the unique bilinear operation that is given, for $a\in A_k$ and $b\in A_l$, by the formula 
\begin{equation}\label{eq:RC-weight}
[a,b]_n:=\sum_{r+s=n} (-1)^r\binom{n+k-1}{s}\binom{n+l-1}{r}D^r(a)D^s(b). 
\end{equation}   
\end{definition}
The operations $[-,-]_n$ have obvious symmetry properties 
 \[
[a,b]_n=(-1)^n[b,a].     
 \]
Note that the definition of these operations makes use of the degrees of $a,b$, so at a first glance they do not belong in the context we consider. However, one may argue as follows. Let us denote by $W$ the derivation of $A$ defined by $W(a)=ka$ for $a\in A_k$. Clearly, we have $[W,D]=2D$. Note that we may write 
\begin{equation}\label{eq:RC-Ug}
[a,b]_n:=\sum_{r+s=n} (-1)^rD^r\binom{W+n-1}{s}(a)D^s\binom{W+n-1}{r}(b), 
\end{equation}
where we use the fact that for $k\ge 0$ the binomial coefficient $\binom{z}{k}$ is a polynomial in~$z$, and we may substitute any element of any associative algebra instead of~$z$. Formula \eqref{eq:RC-Ug} defines the \emph{generalized Rankin--Cohen brackets} $[-,-]_n$ on any associative commutative algebra $A$ that is a module over the two-dimensional Lie algebra~$L$ spanned by $W$ and $D$, where $L$ acts by derivations of the product of~$A$; thus, each Rankin--Cohen bracket is a derived operation. 

It interesting to note that if one considers all Rankin--Cohen brackets together and allows coefficients depending on degrees of arguments, Labriet and Poulain d'Andecy have shown in \cite{MR4691919} that all identities follow from identities with three arguments. Establishing standard identities for each individual $[-,-]_n$ from that point of view seems to be a highly nontrivial task. In particular, the coefficients of identities of \cite{MR4691919} depend on weights of the arguments, or, in other words, involve the derivation $W$, while the standard identities we exhibit are linear combinations with constant coefficients of iterations of $[-,-]_n$.

\subsubsection{Derived operations for one-dimensional subalgebras of~\texorpdfstring{$\Der(A)$}{DerA}}

In this section, we recall a number of examples of derived operations for one-dimensional subalgebras of $\Der(A)$ discovered by Dzhumadildaev in his studies of identities of particular form. 

It is proved in \cite[Sec.~5--9]{MR2514853} that for any derivation $\partial$ of a commutative associative algebra $A$, the brackets
 \[
\{a,b\}=a\partial^2(b)-b\partial^2(a) \text{ and  } \{a,b\}=\partial(a)\partial^2(b)-\partial(b)\partial^2(a)   
 \]
satisfy a fully skew-symmetric identity of degree $4$, and the bracket 
 \[
\partial^3(a)b-2\partial^2(a)\partial(b)+2\partial(a)\partial^2(b)-a\partial^3(b)   
 \]
satisfies a fully skew-symmetric identity of degree $5$. 


In \cite[Sec.~6.6]{MR2676255}, the product 
 \[
a\star b=\partial^3(a)b+4\partial^2(a)\partial(b)+5\partial(a)\partial^2(b)+2a\partial^3(b)   
 \]
on a commutative associative algebra $A$ with a derivation $\partial$ is considered; it is shown that if $A=\k[x]$ and $\partial=\frac{\partial\phantom{x}}{\partial x}$, then $(A,\star)$ is a simple algebra satisfying the so-called $0$-Alia identity, and that this algebra is ``exceptional'' in a certain sense. 

Furthermore, in \cite[Sec.~7]{MR2676255}, it is proved that the products 
\begin{gather*}
a\star_1 b=\partial(a)\partial^2(b),\\
a\star_2 b=-a\partial^m(b)+\partial^m(a)b+\partial^m(ab)
\end{gather*}
(the second defined for any $m\in\mathbb{N}$) on a commutative associative algebra $A$ with a derivation $\partial$ satisfy the $1$-Alia identity. 
Interestingly enough, it is indicated in \cite[Th.~7.3]{MR2676255} that the first of these products satisfies the standard identity
$s_{4,r}^{-\star_1-}(a_1,a_2,a_3,a_4)=0$, whereas our result would only predict a standard identity of degree $9$. 

Finally, in \cite[Sec.~8]{MR2676255}, the product
 \[
a\star b=\partial(\partial(a)b)   
 \]
on a commutative associative algebra $A$ with a derivation $\partial$ is considered; it is proved that if $A=\k[x]$ and $\partial=\frac{\partial\phantom{x}}{\partial x}$, then $(A,\star)$ is a simple $1$-Alia algebra. 
 
\subsection{More general finite sets of derivations}

\subsubsection{``Almost Poisson'' brackets defined by a bivector field}

A class of algebras known as ``almost Poisson algebras'' \cite{MR1747916} arises from considering brackets  
 \[
\{f,g\}=\omega(df,dg)      
 \]
of smooth functions on a manifold, where $\omega$ is an arbitrary bivector field. This means that in local coordinates $x_1$, \ldots, $x_n$, this bracket has the form
 \[
\{f,g\}=\sum_{i<j}\omega^{ij}\left(\frac{\partial f}{\partial x_i}\frac{\partial g}{\partial x_j}-\frac{\partial g}{\partial x_i}\frac{\partial f}{\partial x_j}\right).     
 \]
Such brackets are derived operations. The easiest way to understand this is to write 
 \[
\omega= \sum_{i<j}\omega^{ij}\frac{\partial\phantom{f}}{\partial x_i}\wedge \frac{\partial\phantom{f}}{\partial x_j}=
\sum_{i=1}^{n-1} \frac{\partial\phantom{f}}{\partial x_i}\wedge\left(\sum_{j=i+1}^n\omega^{ij}\frac{\partial\phantom{f}}{\partial x_j}\right),
 \]
which explicitly identifies $2(n-1)$ derivations of which the bracket $\{-,-\}$ is made. 

\subsubsection{Higher brackets of the universal deformation quantization formulas}

The universal deformation quantization formula of Kontsevich \cite{MR2062626} produces, for every smooth Poisson manifold $(M,\pi)$, where $\pi$ is a bivector field on $M$ satisfying the Maurer--Cartan equation $[\pi,\pi]_{SN}=0$ for the so-called Schouten--Nijenhuis bracket of multivector fields, an $\mathbb{R}[[\hbar]]$-linear star product on $C^\infty(M)[[\hbar]]$ of the form
 \[
f\star g=\sum_{k\ge 0}B_k(f,g)\hbar^k,      
 \]
where 
 \[
B_k(f,g)=\sum_{\Gamma} w_\Gamma B_{\Gamma,\pi}(f,g)     
 \]
is the sum, with certain constant weights $w_\Gamma$, of certain bi-differential operators $B_{\Gamma,\pi}(f,g)$ corresponding to directed graphs $\Gamma$ of a particular type. Each such graph has with $2$ vertices ``of the first type'' labelled $f,g$, $k$ vertices ``of the second type'' labelled $1$, \ldots, $k$, and $2k$ directed edges $e_1,f_1$, \ldots, $e_k$, $f_k$, such that for each $j=1,\ldots,k$, the source of both $e_j$ and $f_j$ is the vertex labelled $j$; these graphs are not allowed to have either double edges or directed cycles. The bi-differential operator corresponding to a graph $\Gamma$ puts $\alpha$ at each of the vertices $1$, \ldots, $k$, and uses the arrows as instructions where to apply the partial derivatives in 
 \[
\pi= \sum_{i<j}\pi^{ij}\frac{\partial\phantom{f}}{\partial x_i}\wedge \frac{\partial\phantom{f}}{\partial x_j} .    
 \]
Arguing as in the previous section, we see that each $B_k(f,g)$ is a derived operation for a finitely generated Lie subalgebra of $\Der(C^\infty(M))$.

\subsubsection{An example of Dzhumadildaev}

In \cite{MR2514853}, Dzhumadildaev studies anticommutative nonassociative algebras satisfying fully skew-symmetric identities. In particular, in \cite[Th.~5.3]{MR2514853} he shows that for any two derivations $\partial_1$ and $\partial_2$ on a commutative associative algebra $A$, the bracket 
 \[
\{a,b\}=\partial_1(a)\partial_2(b)-\partial_1(b)\partial_2(a)   
 \]
satisfies a fully skew-symmetric identity of degree $4$; our Theorem \ref{th:FG} below implies that it also satisfies a standard identity of degree $16$.

\section{The main results}\label{sec:result}

In this section we shall prove two general results mentioned in the introduction. Throughout this section, we consider an arbitrary derived operation obtained using linear combinations of iterations of finitely many derivations $D_1$, \ldots, $D_n$, and denote by $\mathfrak{g}$ the Lie subalgebra of $\Der(A)$ generated by these derivations. The first of our results asserts that every derived operation satisfies standard identities, and the second provides a better bound for the degree of such identities in the case where our derivations generate a finite-dimensional Lie subalgebra of $\Der(A)$. 

For the first of our results, let us consider the universal enveloping algebra $U(\mathfrak{g})$, which is also generated by $\{D_1,\ldots,D_n\}$, and define on it a filtration $G^\bullet U(\mathfrak{g})$ that is uniquely determined by the conditions 
 \[
G^0 U(\mathfrak{g})=\k 1, \quad G^1 U(\mathfrak{g})=\k1\oplus \k\{D_1,\ldots,D_n\}, \quad G^k U(\mathfrak{g})G^l U(\mathfrak{g})\subset G^{k+l} U(\mathfrak{g}).   
 \]
Suppose that our derived operation is given by the formula
 \[
\{a,b\}:=\sum_{j=1}^s f_j(a)\cdot g_j(b),    
 \]
where $f_j\subset G^{n_j}U(\mathfrak{g})$,  $g_j\subset G^{m_j}U(\mathfrak{g})$. We define the \emph{$D$-order} of $\{-,-\}$ to be given by $\max_j(n_j+m_j)$.
In the following theorem, we assume $n>1$; the theorem after that covers the case of a finite-dimensional Lie algebra $\mathfrak{g}$, and so includes the case $n=1$.

\begin{theorem}\label{th:FG}
Every derived operation $\{-,-\}$ defined using $n>1$ derivations $D_1$, \ldots, $D_n$ satisfies a standard identity: there exists $d>0$ such that we have
 \[
s_{d,l}^{\{-,-\}}(a_1,\ldots,a_d)=s_{d,r}^{\{-,-\}}(a_1,\ldots,a_d)=0     
 \]
for all $a_1,\ldots,a_d\in A$. In fact, we can take $d=1+\frac{n^{p+1}-1}{n-1}$ for any $p\ge m+1$, where $m$ is the $D$-order of $\{-,-\}$.
\end{theorem}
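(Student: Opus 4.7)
The plan is to lift the iterated bracket to a tensor in $U(\mathfrak{g})^{\otimes d}$, recognize the standard polynomial as a sum of determinants of operators, and then apply a pigeonhole argument based on the dimension of the filtration piece $G^p U(\mathfrak{g})$.

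First I would write $T=\sum_j f_j\otimes g_j\in U(\mathfrak{g})^{\otimes 2}$ so that $\{a,b\}=\mu(T(a\otimes b))$, where $\mu:A\otimes A\to A$ is the multiplication of $A$. The Leibniz rule takes the Hopf-algebraic form $x\circ\mu=\mu\circ\Delta(x)$ for $x\in U(\mathfrak{g})$, where $\Delta$ is the coproduct, and iterating it $d-1$ times produces an element $\Psi_d\in U(\mathfrak{g})^{\otimes d}$ \emph{independent of the permutation of the inputs}, with
\[
\{a_1,\{a_2,\ldots,\{a_{d-1},a_d\}\ldots\}\}=\mu^{(d)}\bigl(\Psi_d(a_1\otimes\cdots\otimes a_d)\bigr).
\]
The recursion $\Psi_d=\sum_j f_j\otimes(\Delta^{(d-1)}(g_j)\cdot\Psi_{d-1})$, with base case $\Psi_2=T$, gives a decomposition $\Psi_d=\sum_\alpha F_1^\alpha\otimes\cdots\otimes F_d^\alpha$ in which each $F_k^\alpha$ for $k<d$ is a product of iterated-coproduct pieces of the $g$'s from outer brackets, capped on the right by some $f_{j_k}$.

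Plugging this into $s_{d,l}$ and recognizing that the antisymmetrization acts only on the inputs $a_1,\ldots,a_{d-1}$, one obtains
\[
s_{d,l}(a_1,\ldots,a_d)=\sum_\alpha\det\bigl(F_i^\alpha(a_j)\bigr)_{1\le i,j\le d-1}\cdot F_d^\alpha(a_d),
\]
so it suffices to show that for every $\alpha$ the operators $F_1^\alpha,\ldots,F_{d-1}^\alpha$ are linearly dependent in $U(\mathfrak{g})$. Since $G^p U(\mathfrak{g})$ is spanned by monomials of length $\le p$ in $D_1,\ldots,D_n$, its dimension is at most $\frac{n^{p+1}-1}{n-1}$, so that any $d-1>\frac{n^{p+1}-1}{n-1}$ elements of $G^p U(\mathfrak{g})$ are automatically linearly dependent. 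I would therefore aim to rewrite $\Psi_d$ so that each $F_k^\alpha$ with $k<d$ lies in $G^p U(\mathfrak{g})$ for $p=m+1$; a symmetric argument applied to right-nested brackets then yields $s_{d,r}=0$.

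The main obstacle is this last rewriting: a naive reading of the recursion only places $F_k^\alpha$ in $G^{km}U(\mathfrak{g})$, whose dimension grows far too fast with $k$. Achieving the tight bound $p=m+1$ requires a careful reorganization of $\Psi_d$ absorbing excess coproduct pieces into the unconstrained last factor $F_d^\alpha$, by exploiting cocommutativity of $\Delta$ and the identity $\mu\circ\Delta(x)=x\circ\mu$, which together imply that many distinct distributions of coproduct pieces among the tensor factors produce the same multilinear operator. Making this combinatorial compression precise is the technical heart of the argument.
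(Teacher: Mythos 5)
Your setup is fine and in fact parallels what the paper does implicitly: writing the iterated bracket as $\sum_\alpha F_1^\alpha(a_{\sigma(1)})\cdots F_{d-1}^\alpha(a_{\sigma(d-1)})F_d^\alpha(a_d)$ with operators independent of $\sigma$, and observing that after antisymmetrization the term indexed by $\alpha$ is $\det\bigl(F_i^\alpha(a_j)\bigr)F_d^\alpha(a_d)$, which vanishes whenever $F_1^\alpha,\ldots,F_{d-1}^\alpha$ are linearly dependent over $\k$ (equal factors being the special case actually used). But the step you yourself flag as the technical heart --- reorganizing $\Psi_d$ so that every $F_k^\alpha$ with $k<d$ lies in $G^{m+1}U(\mathfrak{g})$ --- is not a gap to be filled; it is impossible. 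The middle factors genuinely accumulate coproduct pieces from all the outer $g_j$'s: already for $\{a,b\}=D(a)D(b)$ the argument $a_3$ in $\{a_1,\{a_2,\{a_3,\ldots\}\}\}$ receives $D^3$, and the degree of a middle factor grows linearly with the nesting depth. The proposed compression via $\mu\circ\Delta(x)=x\circ\mu$ cannot repair this, because applying that identity converts a pair of factors into an operator applied to a \emph{product} of arguments, which destroys exactly the form $F_1(a_1)\cdots F_d(a_d)$ on which your determinant argument rests; and you cannot ``absorb'' operators acting on $a_1,\ldots,a_{d-1}$ into the factor acting on the single element $a_d$. Moreover, even if each $F_k^\alpha$ ($k<d$) could be placed in $G^pU(\mathfrak{g})$, your dependence criterion needs $d-1>\dim G^pU(\mathfrak{g})$, whereas the theorem's choice is $d-1=\frac{n^{p+1}-1}{n-1}$, which \emph{equals} that dimension when the $D_i$ generate a free Lie algebra; so the strategy cannot reach the stated bound even in the best case.

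The missing idea is to abandon any per-factor degree bound and use only the \emph{total} one: since each of the $d-1$ brackets contributes at most $m$ to the filtration degree and the coproduct does not raise it, one has $k_1+\cdots+k_d\le (d-1)m$ for the degrees of the factors in any term. Expanding every factor into monomials (words in $D_1,\ldots,D_n$, i.e.\ working in the free algebra $\k\langle X\rangle$ covering $U(\mathfrak{g})$), a term survives antisymmetrization only if the first $d-1$ monomials are pairwise distinct, hence at most $n^k$ of them have degree $k$; with $d-1=1+n+\cdots+n^p$ this forces their total degree to be at least $n+2n^2+\cdots+pn^p$, which exceeds $(d-1)m$ once $p\ge m+1$ --- a contradiction, so every term has two equal rows and the determinant vanishes. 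This global pigeonhole on the sum of degrees, rather than the individual membership in $G^{m+1}$, is what your proposal is missing.
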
 

\begin{proof}
It is enough to consider the left standard identity $s_{d,l}^{\{-,-\}}=0$, since 
 \[
s_{d,r}^{\{-,-\}}=s_{d,l}^{\{-,-\}^{op}}   
 \]
with $\{a,b\}^{op}:=\{b,a\}$. The iteration 
 \[
\{a_{1},\{a_{2},\{\ldots, \{a_{d-2}, \{a_{d-1},a_d\}\} \ldots\}\}\}   
 \] 
of our operation can be written as the sum of terms $h_1(a_1) h_2(a_2)\cdots h_d(a_d)$,
where $h_j\in G^{k_j} U(\mathfrak{g})$, and $k_1+\cdots+k_d\le (d-1)m$. Now let us take $p\in\mathbb{N}$, and choose $d-1=1+n+n^2+\cdots+n^p=\frac{n^{p+1}-1}{n-1}$ to be the number of all noncommutative monomials in $n$ variables $x_1$, \ldots, $x_n$ of total degree at most $p$. We wish to show that for $p$ sufficiently large, the standard identity of degree $d$ is satisfied for our derived operation. We may expand elements $h_1$, \ldots, $h_d$, and assume that each of them is a monomial in the free algebra $\k\langle X\rangle$, of which $U(\mathfrak{g})$ is a quotient. Since we antisymmetrize over $1,\ldots,d-1$, in order for the result to be nonzero, among the first $d-1$ elements there can be at most $1$ monomial of degree $0$, at most $n$ monomials of degree $1$, at most $n^2$ monomials of degree $2$, \ldots, at most $n^p$ monomials of degree $p$. The sum of degrees of these monomials is therefore at least
 \[
n+2n^2+\cdots+pn^p=\frac{pn^{p+1}-n-n^2-\cdots-n^p}{n-1}.
 \]
Thus, we have an inequality
 \[
\frac{pn^{p+1}-n-n^2-\cdots-n^p}{n-1}\le (d-1)m= \frac{n^{p+1}-1}{n-1}m. 
 \] 
Multiplying by $n-1$ and reorganizing the terms, we get
 \[
\left(p-m-\frac{1}{n-1}\right)(n^{p+1}-1)+p+1\le 0, 
 \] 
which is clearly false for $p\ge m+1$. 
\end{proof}

The next result is similar, but uses a different notion of order of the given derived operation $\{-,-\}$, corresponding to the
standard filtration $F^\bullet U(\mathfrak{g})$ uniquely determined by the conditions 
 \[
F^0 U(\mathfrak{g})=\k 1, \quad F^1 U(\mathfrak{g})=\k1\oplus \mathfrak{g}, \quad F^k U(\mathfrak{g})F^l U(\mathfrak{g})\subset F^{k+l} U(\mathfrak{g}).  
 \]
Suppose that our derived operation is given by the formula
 \[
\{a,b\}:=\sum_{j=1}^s f_j(a)\cdot g_j(b),    
 \]
where $f_j\subset F^{n_j}U(\mathfrak{g})$,  $g_j\subset F^{m_j}U(\mathfrak{g})$. We define the $\mathfrak{g}$-order of $\{-,-\}$ to be given by $\max_j(n_j+m_j)$.

\begin{theorem}\label{th:FD}
Suppose that the Lie algebra $\mathfrak{g}$ is finite-dimensional. Then every derived operation satisfies a standard identity: there exists $d>0$ such that we have
 \[
s_{d,l}^{\{-,-\}}(a_1,\ldots,a_d)=s_{d,r}^{\{-,-\}}(a_1,\ldots,a_d)=0     
 \]
for all $a_1,\ldots,a_d\in A$. In fact, we can take $d=1+\binom{\dim\mathfrak{g}+p}{\dim\mathfrak{g}}$ for any $p> m\left(1+\frac1{\dim\mathfrak{g}}\right)$, where $m$ is the $\mathfrak{g}$-order of $\{-,-\}$.
\end{theorem}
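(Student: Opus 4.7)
The plan is to run the same argument as for Theorem \ref{th:FG}, but with the noncommutative monomial basis of the free algebra on $D_1,\ldots,D_n$ replaced by the Poincar\'e--Birkhoff--Witt basis of $U(\mathfrak{g})$; since $\mathfrak{g}$ is finite-dimensional of dimension $N=\dim\mathfrak{g}$, this basis grows polynomially in the degree rather than exponentially, and this will yield a dramatically smaller~$d$. As before, $s_{d,r}^{\{-,-\}}=s_{d,l}^{\{-,-\}^{op}}$ reduces the problem to the left standard polynomial, and expanding the iterated bracket $\{a_1,\{\ldots,\{a_{d-1},a_d\}\ldots\}\}$ one obtains a sum of terms $h_1(a_1)h_2(a_2)\cdots h_d(a_d)$ with $h_j\in F^{k_j}U(\mathfrak{g})$ and $k_1+\cdots+k_d\le(d-1)m$, where $m$ is now the $\mathfrak{g}$-order of $\{-,-\}$.

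Next, I would fix a basis $D_1,\ldots,D_N$ of $\mathfrak{g}$ and expand each $h_j$ in the associated PBW basis of $U(\mathfrak{g})$; the PBW monomials of degree at most $k$ form a basis of $F^k U(\mathfrak{g})$, and there are exactly $\binom{N+k-1}{N-1}$ of them of degree equal to $k$. The same antisymmetrization argument as in Theorem \ref{th:FG} then shows that only tuples of PBW basis elements with pairwise distinct first $d-1$ entries can contribute non-trivially to $s_{d,l}^{\{-,-\}}$: if two of these coincide the corresponding term is alternating in two of the variables on which it depends symmetrically, and hence cancels. A non-vanishing left standard polynomial therefore forces the existence of $d-1$ pairwise distinct PBW basis elements of total degree at most $(d-1)m$.

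Setting $d-1=\binom{N+p}{N}$, which equals the total number of PBW basis elements of degree at most $p$, the minimum possible total degree of $d-1$ pairwise distinct PBW basis elements is attained by taking each basis element of degree $\le p$ exactly once, giving by the hockey-stick identity
\[
\sum_{k=0}^{p} k\binom{N+k-1}{N-1}=N\binom{N+p}{N+1}.
\]
Comparing with the upper bound $(d-1)m=m\binom{N+p}{N}$ and using the elementary ratio $\binom{N+p}{N+1}/\binom{N+p}{N}=p/(N+1)$, the necessary condition rearranges to $p\le m(1+1/N)$, contradicting the hypothesis $p>m(1+1/\dim\mathfrak{g})$. The standard identity of degree $d=1+\binom{N+p}{N}$ therefore has to vanish identically on $A$.

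The only real subtlety, just as in Theorem \ref{th:FG}, is the justification that after expanding each $h_j$ in the PBW basis the coincidence of two basis indices among the first $d-1$ positions really does kill the corresponding contribution, so that the counting above is a genuine necessary condition for non-vanishing. Once this cancellation is in hand, what remains is the PBW dimension count combined with the ratio computation above, and this is exactly what converts the exponential bound of Theorem \ref{th:FG} into the polynomial one claimed here.
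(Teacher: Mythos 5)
Your argument is correct and is essentially the paper's own proof: expand the iterated bracket, pass to the PBW (symmetric-algebra) monomial basis of $U(\mathfrak{g})$, use the antisymmetrization over the first $d-1$ slots (together with commutativity of $A$) to force pairwise distinct monomials, and compare the minimal total degree $N\binom{N+p}{N+1}$ of $\binom{N+p}{N}$ distinct monomials with the bound $(d-1)m$, which yields the same inequality $p\le m\left(1+\tfrac1N\right)$ and hence the same value of $d$. The cancellation step you flag as the remaining subtlety is exactly the one you already justified (a term symmetric in two of the antisymmetrized arguments dies), and it is used in the same implicit way in the paper.
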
 

\begin{proof}
As above, it is enough to consider the left standard identity $s_{d,l}^{\{-,-\}}=0$.
 The iteration 
 \[
\{a_{1},\{a_{2},\{\ldots, \{a_{d-2}, \{a_{d-1},a_d\}\} \ldots\}\}\}   
 \] 
of our operation can be written as the sum of terms $h_1(a_1) h_2(a_2)\cdots h_d(a_d)$,
where $h_j(a_j)\in F^{k_j} U(\mathfrak{g})$, and $k_1+\cdots+k_d\le (d-1)m$. Denote for brevity $n:=\dim\mathfrak{g}$. Now let us take $p\in\mathbb{N}$, and choose $d-1=\binom{n+p}{n}$ to be the number of all commutative monomials in $n$ variables of total degree at most $p$. We wish to show that for $p$ sufficiently large, the standard identity of degree $d$ is satisfied for our derived operation. We may expand elements $h_1$, \ldots, $h_d$, and assume that each of them is a monomial in $S(\mathfrak{g})\cong U(\mathfrak{g})$. Since we antisymmetrize over $1,\ldots,d-1$, in order for the result to be nonzero, among the first $d-1$ elements there can be at most $1$ monomial of degree $0$, at most $n$ monomials of degree $1$, at most $\binom{n+1}2$ monomials of degree $2$, \ldots, at most $\binom{n+p-1}p$ monomials of degree $p$. The sum of degrees of these monomials is therefore at least
 \[
n+2\binom{n+1}2+\cdots+p\binom{n+p-1}{p}= n\binom{n+p}{p-1}.
 \]
Thus, we have an inequality
 \[
n\binom{n+p}{p-1}\le (d-1)m= \binom{n+p}{n}m. 
 \] 
Simplifying, we get
 \[
p\le m \left(1+\frac1n\right),  
 \] 
so to ensure that this inequality does not hold, it is enough to take $p>m \left(1+\frac1n\right)$, and $d=1+\binom{n+p}{n}$ to complete the proof. 
\end{proof}

Slightly refining the proof of this theorem, one obtains standard identities for Rankin--Cohen brackets, which was in fact the starting point of this paper. As indicated in Section \ref{sec:rc}, many identities for Rankin--Cohen brackets depend on weights of the arguments, so that the known identity
$m[[a,b]_1,c]_0+k[[b,c]_1,a]_0+l[[c,a]_1,b]_0=0$ for all $a\in A_k$, $b\in A_l$, $c\in A_m$ means in our terms
 \[
[[a,b]_1,W(c)]_0+[[b,c]_1,W(a)]_0+[[c,a]_1,W(b)]_0=0.     
 \]
Thus, outside the context of the present paper, the following result is somewhat surprising.

\begin{corollary}
The generalized Rankin--Cohen bracket $[-,-]_n$ satisfies the standard identity of degree $d=\frac{9n(n+1)}{2}-1$.
\end{corollary}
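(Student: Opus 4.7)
The plan is to realize $[-,-]_n$ as a derived operation for the two-dimensional Lie algebra $\mathfrak{g}=\k W\oplus\k D$ with $[W,D]=2D$; formula~\eqref{eq:RC-Ug} shows that its $\mathfrak{g}$-order is $m=2n$. A naive application of Theorem~\ref{th:FD} would only give $d=1+\binom{3n+3}{2}$, weaker than the claim, so we must refine the counting to exploit the specific shape of the operators $D^r\binom{W+n-1}{s}$.

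Passing to the associated graded $S(\mathfrak{g})\cong\k[W,D]$, expand the iterated bracket
\[
\{a_1,\{a_2,\ldots,\{a_{d-1},a_d\}\ldots\}\} = \sum h_1(a_1)\cdots h_d(a_d)
\]
with each $h_j$ a monomial in $W$ and $D$. Each bracket application contributes exactly $2n$ to the total degree ($n$ on each side), so $\sum_{j=1}^d\deg h_j=2n(d-1)$. Moreover, each argument $a_j$ receives from its own bracket in the nesting an operator of degree exactly $n$, while outer brackets only add non-negative amounts by Leibniz distribution; hence $\deg h_j\geq n$ for every $j$, and in particular $\deg h_d\geq n$, giving $\sum_{j=1}^{d-1}\deg h_j\leq n(2d-3)$.

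For $s_{d,l}^{[-,-]_n}$ to fail to vanish, some expansion term must have $h_1,\ldots,h_{d-1}$ pairwise distinct; thus it suffices to show that every choice of $d-1$ distinct monomials in $\k[W,D]$ of degree $\geq n$ has degree sum exceeding $n(2d-3)$. The minimum of this sum is attained greedily by filling in the $n+1$ monomials of degree $n$, then the $n+2$ of degree $n+1$, and so on; a direct arithmetic check shows that this greedy minimum first exceeds $n(2d-3)$ at $d=(3n+1)(n+1)$. Since $\tfrac{9n(n+1)}{2}-1-(3n+1)(n+1)=\tfrac{(n-1)(3n+4)}{2}\geq 0$ for $n\geq 1$, and since $s_{d,l}\equiv 0$ implies $s_{d',l}\equiv 0$ for every $d'\geq d$ (by expanding $s_{d+1,l}$ as a signed sum of $s_{d,l}$'s along the innermost argument of the deepest bracket), the claim follows. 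The main bookkeeping obstacle is the verification that $\deg h_j\geq n$ for intermediate indices, which requires carefully tracking how the Leibniz rule distributes the right-hand operators of outer brackets onto inner arguments in the graded setting.
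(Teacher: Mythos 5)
Your overall strategy (PBW expansion, counting pairwise distinct monomials under antisymmetrization, exploiting the specific shape of the Rankin--Cohen operators) is the same as the paper's, but the key refinement you rely on is false. The operators $D^r\binom{W+n-1}{s}$ are not homogeneous: $\binom{W+n-1}{s}$ is a degree-$s$ polynomial in $W$ with nonzero lower-order terms (its constant term is $\binom{n-1}{s}$). Hence, once you expand the iterated bracket into terms $h_1(a_1)\cdots h_d(a_d)$ with each $h_j$ a \emph{monomial} --- which is exactly what the distinctness argument requires --- the claims that ``each bracket contributes exactly $2n$'' and that $\deg h_j\ge n$ for every $j$ both fail. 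Already for a single bracket with $n=2$, the term $r=2$, $s=0$ produces $D^2(a)\cdot\tfrac12 W(b)$, so the second argument carries a monomial of degree $1<n$. Relatedly, you cannot simply ``pass to the associated graded $S(\mathfrak{g})$'': vanishing of the top-degree symbol of the antisymmetrized expression would not imply vanishing of the expression itself; one must work with the filtration, i.e., with \emph{upper} bounds on degrees only, which is what the paper does.

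What is actually true, and what the paper's proof uses, is much weaker: all the operators $D^r\binom{W+n-1}{s}$ lie in the augmentation ideal of $U(L)$, so every $h_j$ (including $h_d$) has degree at least $1$; consequently $\sum_{j=1}^{d-1}\deg h_j\le 2n(d-1)-1$, while $d-1=2+3+\cdots+(3n)+(3n-1)$ pairwise distinct monomials of positive degree in the two variables $W,D$ have degree sum at least $2+2\cdot 3+\cdots+(3n-1)(3n)+(3n-1)(3n)=2n(d-1)$ for $d=\tfrac{9n(n+1)}{2}-1$, a contradiction. Your greedy count with the (false) lower bound $\deg h_j\ge n$ is precisely what produces the smaller threshold $d=(3n+1)(n+1)$; with only the correct bound $\deg h_j\ge 1$ the threshold is the $d$ of the statement, and the extra ``$-1$'' coming from $h_d$ is what closes the argument. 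Your final reduction ($s_{d,l}\equiv0$ implies $s_{d',l}\equiv0$ for $d'\ge d$, by grouping permutations according to the innermost entry) is correct, but it cannot rescue the argument, since the base case to which you would apply it rests on the false lemma.
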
 

\begin{proof}
Let us note that the formula 
 \[
[a,b]_n:=\sum_{r+s=n} (-1)^rD^r\binom{W+n-1}{s}(a)D^s\binom{W+n-1}{r}(b)     
 \]
implies that in this case the elements $f_j$, $g_j$ in the formula for the derived operation belong to the augmentation ideal of the universal enveloping algebra of the two-dimensional Lie algebra generated by $D$ and $W$. Therefore in the above proof for the bracket $[-,-]_n$ $\mathfrak{g}$-order $2n$, the sum of filtrations of the polynomials $h_1$, \ldots, $h_{d-1}$ is strictly less than $2n(d-1)$. Note that we have 
 \[
d=\frac{9n(n+1)}{2}-1=\frac{3n(3n+1)}{2}+3n-1=1+2+3+\cdots+(3n-1)+(3n)+(3n-1).
 \] 
Since we antisymmetrize elements from the augmentation ideal over $1,\ldots,d-1$, there can be at most two monomials of degree $1$, at most three monomials of degree $2$, etc., and we have $d-1=2+3+\cdots+(3n-1)+(3n)+(3n-1)$ monomials, hence the sum of their degrees is at least 
 \[
2+2\cdot 3+\cdots+(3n-1)(3n)+(3n-1)(3n)=\frac{(3n-1)(3n)(3n+1)}3+(3n-1)(3n)     
 \]
Thus, we have the inequality 
 \[
\frac{(3n-1)(3n)(3n+1)}3+(3n-1)(3n)<2n(d-1)=2n\left(\frac{(3n)(3n+1)}{2}+3n-1-1\right),    
 \]  
which is a contradiction since the left hand side is easily seen equal to the right hand side.  
\end{proof}

\section{A generalization for higher number of arguments}\label{sec:higher}

Let us record an obvious generalization of our main results. Suppose that, like above, $A$ is an associative commutative algebra, and $D_1$, \ldots, $D_n$ are derivations of that algebra. We may define a $m$-linear operation $\{-,\ldots,-\}$ on $A$ by the formula
 \[
\{a_1,\ldots,a_k\}:=\sum_{j=1}^s f_j^{(1)}(a_1)\cdots f_j^{(k)}(a_k),   
 \] 
where $f_j^{(i)}$, $i=1,\ldots,k$, $j=1,\ldots,s$, are linear combinations of iterations of the derivations $D_1$, \ldots, $D_n$. Then
\begin{multline}\label{eq:standard-k}
\sum_{\sigma\in S_{(k-1)d}}(-1)^{\sigma}\{a_{\sigma(1)},a_{\sigma(2)},\ldots,a_{\sigma(k-1)},\{a_{\sigma(k)},\ldots,a_{\sigma(2k-2)}\{\ldots,\\ 
 \{a_{\sigma((d-1)(k-1)+1)}, \ldots a_{\sigma(d(k-1)))},a_{d(k-1)+1}\} \ldots\}\}\}=0.     
\end{multline}
The proof is completely analogous to that of our main result. This for example applies to the Wronskian-type operations that appear in the literature, see, e.g., \cite{MR2169395,MR2137432,kiselev2025jacobiidentitieswronskiandeterminants,kiselev2025wronskiansnarybracketsfinitedimensional}. Identities of the form \eqref{eq:standard-k} are known for some of such operations since they are ``homotopy Lie algebras with operations of wrong homotopical degrees'', see \cite[Sec.~6]{MR4151721} for precise statements. 


\section*{Acknowledgements. } I thank Don Zagier for the most illuminating talk on Rankin--Cohen brackets at the Manin Memorial Conference at the Max-Planck Institute for Mathematics in August 2025. I also gratefully acknowledge conversations with Gleb Koshevoy and Vadim Schechtman at that conference, as well as the extraordinary effort the organizers of the conference and the staff of the MPIM put into making this conference a truly remarkable event, and a fitting tribute to Yuri Ivanovich Manin. Finally, I am grateful to Askar Dzhumadildaev for interesting discussions of a preliminary version of this work, and to Arthemy Kiselev for bringing \cite{kiselev2025jacobiidentitieswronskiandeterminants,kiselev2025wronskiansnarybracketsfinitedimensional} to my attention.

\section*{Funding. }
This research was funded by the  Science Committee of the Ministry of Science and Higher Education of the Republic of Kazakhstan (Grant No. BR 28713025). 

\section*{Competing interests. } The author has no competing interests to declare. 

\printbibliography

\end{document}